\newcommand{\N}{\mathbb{N}}
\newcommand{\Z}{\mathbb{Z}}
\newcommand{\E}{\mathbb{E}}
\newcommand{\set}[1]{\left\{#1\right\}}
\renewcommand{\P}{\mathbb{P}}
\newcommand{\dd}{{\rm{d}}}
\newcommand{\RW}{P^{\omega,\mu}}
\newtheorem{theorem}{Theorem}[section]
\newtheorem{lemma}[theorem]{Lemma}
\theoremstyle{definition}
\newtheorem{remark}[theorem]{Remark}
\newtheorem{definition}[theorem]{Definition}
\numberwithin{equation}{section}
\newcounter{anc}[section]
\title[Slowdown]{Slowdown estimates for one-dimensional random walks
in random environment with %heavy tailed 
holding times}
\author{Amir Dembo}
\address{Kyoto University and Stanford University}
\email{adembo@stanford.edu}
\author{Ryoki Fukushima}
\address{Research Institute of Mathematical Sciences, Kyoto University}
\email{ryoki@kurims.kyoto-u.ac.jp}
\author{Naoki Kubota}
\address{College of Science and Technology, Nihon University}
\email{kubota.naoki08@nihon-u.ac.jp}
\begin{document}

\keywords{random walk; random environment; large deviation; slowdown}
\subjclass[2010]{60K37; 60F10; 60J75}
%\date{}

\begin{abstract}
We consider a one dimensional random walk in random environment 
that is uniformly biased to one direction. In addition to the transition 
probability, the jump rate of the random walk is assumed to be spatially
inhomogeneous and random. We study the probability that the random walk 
travels slower than its typical speed and determine its decay rate asymptotic. 
\end{abstract}
% =========================================================

\maketitle
%\tableofcontents
%\pagewiselinenumbers

\section{Introduction}
\subsection{Setting and preliminaries}
Let $(\omega=\{\omega(x)\}_{x\in\Z},\mathbf{P})$ be indepnedent and identically 
distributed random variables taking values in $(0,1)$. 
For a given $\omega$, the random walk in random environment $\{X_n\}_{n=0}^\infty$ is the Markov chain with transition probability 
\begin{equation}
\label{transition}
\begin{split}
 \omega(x)&= P^\omega(X_{n+1}=x+1|X_n=x)\\
 &=1-P^\omega(X_{n+1}=x-1|X_n=x).
\end{split}
\end{equation}
It is said to be uniformly biased (to the right) if 
$\mathbf{P}\textrm{-essinf}\, \omega(0)>1/2$. In this case, the law of large 
numbers is known to hold with a positive speed (see~\cite{Sol75}):
\begin{equation}
 \lim_{n\to\infty}\frac{1}{n}X_n=v_{\mathbf{P}}>0. 
\end{equation}
In this paper, we consider a variant of this process whose jump rate is 
spatially inhomogeneous and random. Specifically, as in~\cite{DGZ04}, let
$(\mu=\{\mu(x)\}_{x \in \Z},\P)$ be independent and identically distributed strictly positive random variables of mean one. 
For a given $\mu$, we consider a continuous time random walk 
$(X=\{X_t\}_{t\ge 0},\{ P^{\omega,\mu}_z \}_{z \in \Z})$ on $\Z$ 
whose jump rates from $x$ to $x+1$ and $x-1$ are given by $\omega(x)/\mu(x)$ and
$(1-\omega(x))/\mu(x)$, respectively.
This type of random walk (usually with $\omega\equiv 1/2$) is sometimes called 
a random hopping time dynamics. If in addition the mean of $\mu(0)$ is infinite, it is also called Bouchaud's trap model.
Since we assumed finite mean, there is no trapping effect and it is easy to 
check that the law of large numbers holds:
\begin{equation}
P^{\omega,\mu}_0\left(\lim_{t\to\infty}\frac{1}{t}X_t=v_{\mathbf{P}}\right)=1\qquad
\mathbf{P}\otimes \P\textrm{-almost surely}.
\end{equation}
The large deviation principle of rate $t$ 
also holds for the law of $\{t^{-1}X_t\}_{t>0}$
as a special case of the results of ~\cite{DGZ04}.
 However, when $\mu$ is unbounded, 
it is easily seen that the \emph{slowdown} probability
\begin{equation}
P^{\omega,\mu}_0\left(X_t<vt\right)
\textrm{ for }v\in (0,v_\mathbf{P}) 
\end{equation}
exhibits sub-exponential decay. The aim of this work is to establish the 
precise sub-exponential rate of the slowdown probability decay and 
relate it to the tail of the law of $\mu(x)$. While our methods 
apply for quite general distributions
of $\mu$, we consider three representative classes (Pareto, Intermediate and 
Weibull) to make the statements concise. To be precise, suppose
\begin{equation}\label{eq:g-def}
 \P(\mu(0)>r)=\exp\{-g(r)\}
\end{equation} 
and $g$ has either of the following forms:
\begin{itemize}
 \item[(P)] $e^{g(r)}$ is regularly varying at $\infty$ with index $\alpha>1$;
 \item[(I)] $g$ is slowly varying at $\infty$ satisfying 
$\lim_{r\to\infty}g(r)/\log r=\infty$;
 \item[(W)] $g$ is regularly varying with index $\alpha>0$ at $\infty$.
\end{itemize} 
Furthermore, when $g$ satisfies (W) with $\alpha=1$, we assume in addition that the so-called Cram\'er condition holds:
\begin{itemize}
 \item[(C)] there exists $C>0$ such that $\E[e^{C\mu(0)}]<\infty$.
\end{itemize}

In the quenched slowdown estimate, the extreme value of $\mu$ plays an 
important role. Let us recall two results from extreme value theory. Let $g^{-1}$ denote the left-continuous inverse of $g$. The first result gives us a condition under which the running maxima of $\{\mu(x)\}_{x\in\Z}$ can be approximated by a deterministic sequence up to multiplicative constant.
\begin{lemma}
\label{lem:stable}
Suppose that %there exists a finite $l\ge 1$ such that for all $\epsilon>0$, 
\begin{equation}
l=\sup\{\lambda\ge 1\colon \E\left[\exp\{g(\lambda\mu(0))\}\right]<\infty\}<\infty.
% \E\left[\exp\{g((l^{-1}-\epsilon)\mu(0))\}\right]<\infty\text{ and }
% \E\left[\exp\{g((l^{-1}+\epsilon)\mu(0))\}\right]=\infty.
\label{eq:stability}
\end{equation}
Then 
\begin{equation}
l^{-1} \le \liminf_{t\to\infty}\frac{\max_{0 \le x \le t}\{\mu(x)\}}{g^{-1}(\log t)}\le 
\limsup_{t\to\infty}\frac{\max_{0 \le x \le t}\{\mu(x)\}}{g^{-1}(\log t)}\le l.
\end{equation}
\end{lemma}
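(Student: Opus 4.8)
The plan is to establish the two one-sided bounds separately by Borel--Cantelli arguments, feeding in \eqref{eq:stability} only through a few deterministic consequences. Write $M_t:=\max_{0\le x\le t}\{\mu(x)\}$ and $a(t):=g^{-1}(\log t)$, so that $a$ is non-decreasing with $a(t)\to\infty$. The consequences of \eqref{eq:stability} that I would record first are: for every $\lambda>l$ the series $\sum_{n}\P(\mu(0)>\lambda\,a(n))$ converges and, writing $G_\lambda(u):=g(\lambda g^{-1}(u))-u\ (\ge0)$, the same computation upgrades this to $\int^{\infty}e^{-G_\lambda(u)}\,du<\infty$, which --- given the eventual monotonicity of $G_\lambda$ --- forces $G_\lambda(u)\ge\tfrac12\log u$ for large $u$, equivalently
\[
g\bigl(\lambda^{-1}g^{-1}(u)\bigr)\le u-\tfrac14\log u\qquad\text{for all large }u .
\]
(For $\lambda<l$ the series diverges; together with Borel--Cantelli this gives $\limsup_t M_t/a(t)=l$, so the constants are sharp, but this is not needed for the two stated inequalities.)

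For the rightmost inequality I would fix $\lambda>l$; by the convergence above and Borel--Cantelli there is an a.s.\ finite $N$ with $\mu(n)\le\lambda a(n)$ for all $n\ge N$, so that, using monotonicity of $a$, for every $t\ge N$
\[
M_t=\max\bigl\{M_{N-1},\ \max\nolimits_{N\le n\le t}\mu(n)\bigr\}\le\max\bigl\{M_{N-1},\ \lambda\,a(t)\bigr\},
\]
whence dividing by $a(t)\to\infty$ gives $\limsup_t M_t/a(t)\le\lambda$; letting $\lambda\downarrow l$ along a sequence finishes. This half uses nothing beyond monotonicity of $a$.

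For the leftmost inequality I would fix $\lambda>l$, put $c:=\lambda^{-1}<l^{-1}$, and split the nonnegative integers into dyadic blocks $I_k:=[2^{k-1},2^k)$, so $|I_k|=2^{k-1}$. Independence of $\{\mu(x)\}_{x\ge0}$, the bound $1-y\le e^{-y}$, and the displayed estimate applied with $u=(k+1)\log2$ give
\[
\P\bigl(\max\nolimits_{n\in I_k}\mu(n)\le c\,a(2^{k+1})\bigr)=\bigl(1-e^{-g(c\,a(2^{k+1}))}\bigr)^{2^{k-1}}\le\exp\bigl\{-2^{k-1}e^{-g(c\,a(2^{k+1}))}\bigr\}\le\exp\bigl\{-\tfrac14\bigl((k+1)\log2\bigr)^{1/4}\bigr\},
\]
which is summable in $k$. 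Borel--Cantelli then produces an a.s.\ finite $K$ such that for every $k\ge K$ the block $I_k$ contains an index $n_k$ with $\mu(n_k)>c\,a(2^{k+1})$. Given $t\ge 2^K$, pick $k\ge K$ with $t\in[2^k,2^{k+1})$; then $n_k\le 2^k\le t$ and, by monotonicity of $a$, $M_t\ge\mu(n_k)>c\,a(2^{k+1})\ge c\,a(t)$. Hence $\liminf_t M_t/a(t)\ge c=\lambda^{-1}$, and $\lambda\downarrow l$ finishes. Tying the exceedance threshold to the top value $a(2^{k+1})$ of the current block is what makes the comparison with $a(t)$ automatic and keeps the argument free of any regularity hypothesis on $g^{-1}$; the factor-$4$ loss in the exponent is precisely what the $\log u$ of slack from \eqref{eq:stability} pays for.

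The step I expect to be the real work is the first one: extracting from the single integrability hypothesis \eqref{eq:stability} the quantitative bound $G_\lambda(u)\ge\tfrac12\log u$ for $\lambda>l$. A bare divergence $G_\lambda(u)\to\infty$ is too weak --- the Borel--Cantelli probabilities in the lower bound would then only be known to tend to $0$, not to be summable along the dyadic grid --- and it is the genuine integrability $\int^{\infty}e^{-G_\lambda}<\infty$ encoded in \eqref{eq:stability}, together with the eventual monotonicity of $G_\lambda$, that supplies the logarithmic margin. The remaining overhead is bookkeeping with the generalized inverse $g^{-1}$ (left-continuity and the near-identities $g(g^{-1}(u))\simeq u$, $g^{-1}(g(r))\simeq r$, plus the monotonicity of $G_\lambda$); for the classes (P), (I), (W) the relevant $g$ is regular enough that this is routine, and in (W) one has $l=1$, so that there the conclusion reads $M_t/g^{-1}(\log t)\to1$ a.s., while in (P) one has $l=\infty$ and the statement is vacuous.
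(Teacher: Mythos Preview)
The paper does not prove this lemma; it simply cites \cite[Corollary~1]{RT73}. Your Borel--Cantelli argument---direct for the $\limsup$ bound, dyadic blocks for the $\liminf$ bound, with the quantitative input $G_\lambda(u)\ge\tfrac12\log u$ extracted from the integrability hypothesis---is precisely the classical approach behind that reference, so you are supplying what the paper omits rather than offering an alternative to it.

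Two points to tighten. First, be careful how you read summability off from \eqref{eq:stability}: the tail-integral identity actually gives $\E[e^{g(\lambda'\mu(0))}]\asymp\sum_n\P(\mu(0)>a(n)/\lambda')$, and since $g(\mu(0))$ is exactly standard exponential one has $\E[e^{g(\lambda'\mu(0))}]=\infty$ for every $\lambda'\ge1$. Thus the set in \eqref{eq:stability} is empty as literally written; the examples the paper lists immediately after the lemma (and the acknowledged referee correction) make clear that the intended $l$ is the convergence threshold for $\sum_n\P(\mu(0)>\lambda\,a(n))$, and under that reading your deductions are correct. Second, your passage from $G_\lambda(u)\ge\tfrac12\log u$ to the dual bound $g(\lambda^{-1}g^{-1}(u))\le u-\tfrac14\log u$ implicitly uses that $g(\lambda g^{-1}(u))/u$ stays bounded; this holds in classes (I) and (W) by slow or regular variation of $g$, but is not a consequence of integrability plus monotonicity alone, so it is worth flagging explicitly (as you do in your closing paragraph).
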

This can be found in~\cite[Corollary 1]{RT73}. Any distribution in the class (W) satisfies~\eqref{eq:stability} with $l=1$. For the class (I), it is satisfied for $g(r)=(\log r)^\beta1_{\{r\ge 1\}}$ ($\beta>1$) with $l=1$ and $g(r)=\log r \log\log r 1_{\{r\ge e\}}$ with $l=e$, but not for $g(r)=\log r \log\log\log r 1_{\{r\ge e^e\}}$. No distribution in (P) satisfies~\eqref{eq:stability}: indeed, the regular variation assumption implies $\lim_{x\to\infty}g(x)-g(\delta x)=-\alpha\log \delta$, and then we know from~\cite[Lemma~2.2-(i)]{Tom86} that~\eqref{eq:stability} fails to hold for any $l\ge 1$. %\RF{(This is the sole reason why I assumed the regular variation in (P). If we only assume $g(r)=(\alpha+o(1))\log r$, then I don't know if the assumption of the above lemma always fails. And I don't want to lead readers to this pathology.)}

In order to cover the cases where~\eqref{eq:stability} fails to hold, we state another lemma which readily follows from~\cite[Theorems~3.5.1 and~3.5.2]{EKM97}:
\begin{lemma}
\label{lem:unstable}
For any $\epsilon>0$, $\P\textrm{-a.s.}$, for all sufficiently large $t$,
\begin{equation}
g^{-1}((1-\epsilon)\log t)\le \max_{0\le x\le t}\{ \mu(x) \}\le g^{-1}((1+\epsilon)\log t).
\end{equation}
\end{lemma}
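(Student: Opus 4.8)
The plan is to derive both inequalities from the classical Borel--Cantelli criteria for the almost sure size of maxima of i.i.d.\ sequences, \cite[Theorems~3.5.1 and 3.5.2]{EKM97}, applied to our tail $\bar F(r):=\P(\mu(0)>r)=e^{-g(r)}$, and then to pass from integer times to real times. Write $M_s:=\max_{0\le x\le s}\{\mu(x)\}$, so $M_t=M_{\lfloor t\rfloor}$. The preliminary facts I would record about the left-continuous inverse $g^{-1}(y)=\inf\{r:g(r)\ge y\}$ are: $g^{-1}(y)<\infty$ and $g^{-1}(y)\to\infty$ as $y\to\infty$ (since $\mu(0)$ is a.s.\ finite and has unbounded support in each of (P), (I), (W)); and, writing $a=g^{-1}(y)$, right-continuity of $g=-\log\bar F$ gives $g(a)\ge y$, hence $\P(\mu(0)>a)=e^{-g(a)}\le e^{-y}$, whereas $g(r)<y$ for all $r<a$ gives $\P(\mu(0)\ge a)\ge e^{-y}$ on letting $r\uparrow a$.

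For the upper bound I would fix $\epsilon>0$ and set $u_n:=g^{-1}((1+\epsilon)\log n)$, a nondecreasing sequence with $u_n\to\infty$. Then $\P(\mu(n)>u_n)=\bar F(u_n)\le n^{-(1+\epsilon)}$ is summable, so by Borel--Cantelli $\P(\mu(n)>u_n\text{ i.o.})=0$; and monotonicity of $(u_n)$ gives the inclusion $\{M_n>u_n\text{ i.o.}\}\subseteq\{\mu(n)>u_n\text{ i.o.}\}$ (if $M_n>u_n$ with maximiser $x_n\le n$, then $\mu(x_n)>u_n\ge u_{x_n}$, and since $M_n\to\infty$ along such $n$ the indices $x_n$ cannot stay in a fixed finite set). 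Hence $\P$-a.s.\ $M_n\le g^{-1}((1+\epsilon)\log n)$ eventually, and so for large $t$,
\[
M_t=M_{\lfloor t\rfloor}\le g^{-1}\bigl((1+\epsilon)\log\lfloor t\rfloor\bigr)\le g^{-1}\bigl((1+\epsilon)\log t\bigr),
\]
by monotonicity of $g^{-1}$; no loss in $\epsilon$ is needed here.

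For the lower bound I would fix $\epsilon>0$, pick $\epsilon'\in(0,\epsilon)$, and set $v_n:=g^{-1}((1-\epsilon')\log n)$. Using independence and $\P(\mu(0)\ge v_n)\ge n^{-(1-\epsilon')}$,
\[
\P(M_n<v_n)=\bigl(1-\P(\mu(0)\ge v_n)\bigr)^{n+1}\le\exp\bigl\{-(n+1)n^{-(1-\epsilon')}\bigr\}\le e^{-n^{\epsilon'}}
\]
for $n$ large, which is summable, so Borel--Cantelli gives $\P$-a.s.\ $M_n\ge g^{-1}((1-\epsilon')\log n)$ eventually. Finally, for large $t$ one has $\log\lfloor t\rfloor\ge\log t-\log 2$, so $(1-\epsilon')\log\lfloor t\rfloor\ge(1-\epsilon)\log t+\bigl((\epsilon-\epsilon')\log t-(1-\epsilon')\log 2\bigr)\ge(1-\epsilon)\log t$ once $t$ is large enough, and therefore $M_t=M_{\lfloor t\rfloor}\ge g^{-1}((1-\epsilon')\log\lfloor t\rfloor)\ge g^{-1}((1-\epsilon)\log t)$.

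The main points to watch — there is no real difficulty beyond them — are: (i) in the upper bound, one must not estimate $\P(M_n>u_n)$ by the crude union bound $(n+1)\bar F(u_n)\le(n+1)n^{-(1+\epsilon)}$, which is not summable for $\epsilon\le1$; the reduction to $\{\mu(n)>u_n\text{ i.o.}\}$ using the monotonicity of $(u_n)$ is precisely what makes the Borel--Cantelli sum converge (this is the content of \cite[Theorem~3.5.1]{EKM97}); and (ii) in the lower bound, the passage from $n$ to $t$ costs a small amount of $\epsilon$, since $\log\lfloor t\rfloor<\log t$ and $g^{-1}$ is only monotone, not Lipschitz.
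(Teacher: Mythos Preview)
Your proof is correct and follows exactly the route the paper indicates: the paper does not give a proof but simply states that the lemma ``readily follows from~\cite[Theorems~3.5.1 and~3.5.2]{EKM97},'' and what you have written is precisely the Borel--Cantelli argument underlying those two theorems, together with the (careful) bookkeeping for $g^{-1}$ and the passage from integer to real $t$. Your remarks (i) and (ii) correctly identify the only places where care is required.
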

This lemma aims at giving an easily computable asymptotic bound and is not sharp. Once an explicit form of the distribution is given, one can often obtain a finer asymptotics from~\cite[Theorems~3.5.1 and~3.5.2]{EKM97}. 
For example, if $g(r)=\alpha\log r 1_{\{r\ge e\}}$, then it is shown in~\cite[Lemma~3.3]{HMS08} that 
\begin{equation}
 \P\left(v_c(t) \le \max_{0\le x\le t} \{ \mu(x) \} \le u_\rho(t)
 \textrm{ eventually}\right)
=\begin{cases}
 1,&\text{if }\rho>0 \text{ and }c\in(0,1),\\
 0,&\text{if }\rho\le 0 \text{ or }c\ge 1,\\
 \end{cases}
\label{eq:maxPareto}
\end{equation}
where
\begin{align}
u_\rho(t)&=(t\log t\log\log t)^{1/\alpha}( \log\log\log t)^{1/\alpha+\rho},
\label{eq:u}\\
v_c(t)&=c(t/\log\log t)^{1/\alpha}.\label{eq:v}
\end{align}

\subsection{Results} To simplify the presentation,
%of the results, 
we introduce the following notation: 
\begin{definition} 
For two functions $f,g\colon (0,\infty)\to (0,\infty)$, we write
$f(t)\asymp_{\log} g(t)$ when there exists a $c\in(0,\infty)$ such that for all sufficiently large $t$,
\begin{equation}
 c^{-1} \log g(t)\le \log f(t) \le c \log g(t).
\end{equation}
When only the left inequality holds, we 
write $f(t) \lesssim_{\log} g(t)$. 
\end{definition}
Our first result is the following quenched slowdown estimate. 
%%%%%%%%%%%%%%%%%%%%%%%%%%%%%%%%%%%%%%%%%%%%%%%%%%%%%%%%%%%%%%%%%%%%%%%
\begin{theorem}
\label{quenched}
%\RF{(The statement is corrected.)}
Let $\mathbf{P}$ be a uniformly biased environment: 
$\mathbf{P}\textrm{-\upshape{essinf}}\,{\omega(0)}>1/2$. 
For any $v\in (0,v_\mathbf{P})$, $\mathbf{P}\otimes \P$-almost surely, 
\begin{equation}
\exp\left\{-\frac{t}{g^{-1}((1-\epsilon)\log t)}\right\}
 \lesssim_{\log}  P^{\omega,\mu}_0\left(X_t<vt\right)
 \lesssim_{\log} \exp\left\{-\frac{t}{g^{-1}((1+\epsilon)\log t)}\right\}.
\end{equation} 
Moreover, if $\mu$ satisfies the assumption of Lemma~\ref{lem:stable}, then
\begin{equation}
 P^{\omega,\mu}_0\left(X_t<vt\right)
\asymp_{\log} \exp\left\{-\frac{t}{g^{-1}(\log t)}\right\}.
\end{equation} 
\end{theorem}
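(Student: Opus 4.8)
The plan is to reduce the slowdown event to a single, well-chosen trap and to prove matching upper and lower bounds on its cost. The heuristic is that the cheapest way to be at position $<vt$ at time $t$, given the ballistic drift, is for $\{X_s\}$ to reach some site $y$ carrying an exceptionally large $\mu(y)$ and then to sit there (i.e., to fail to jump) for essentially the whole time window. Since the holding time at $y$ is exponential with parameter of order $1/\mu(y)$, the probability of not moving for time $\Theta(t)$ is $\exp\{-\Theta(t/\mu(y))\}$; optimizing over $y\le vt$ (or $y$ up to order $t$) and invoking Lemmas 1.3 and 1.4 for $\max_{0\le x\le t}\mu(x)\asymp g^{-1}(\log t)$ produces the stated rate. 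So the first step is to set up this dictionary carefully and then to make the two inequalities rigorous.

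For the \emph{lower bound} I would fix $\epsilon>0$ and, using Lemma~1.4, locate (on a set of full $\P$-measure, for all large $t$) a site $y=y(t)\in[0,\epsilon vt]$, say, with $\mu(y)\ge g^{-1}((1-\epsilon)\log t)$. Then I would bound $P^{\omega,\mu}_0(X_t<vt)$ from below by the probability of the explicit strategy: (a) travel from $0$ to $y$ in the allotted first fraction of time — this has probability bounded below by a quenched constant times an exponentially small (in $y$, hence in $o(t)$) factor, harmless on the $\asymp_{\log}$ scale because $y=o(t)$ and because the uniform bias makes rightward travel cheap — and (b) make no jump at $y$ during the remaining time $\Theta(t)$, which costs exactly $\exp\{-\Theta(t)/\mu(y)\}\ge \exp\{-\Theta(t)/g^{-1}((1-\epsilon)\log t)\}$. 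Multiplying and relabeling $\epsilon$ gives the left $\lesssim_{\log}$ bound. One should also handle the trivial travel-time lower bound (the walk is at $o(t)$ just by being slow); a short argument shows the holding-time mechanism dominates whenever $\mu$ is unbounded.

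For the \emph{upper bound} I would fix $\epsilon>0$ and, using Lemma~1.4 again, note that $\P$-a.s., for all large $t$, every $\mu(x)$ with $0\le x\le t$ satisfies $\mu(x)\le g^{-1}((1+\epsilon)\log t)=:M_t$. On the event $\{X_t<vt\}$ the walk spends total time $t$ among sites in (roughly) $[\text{(min reached)},vt]$; since the environment is uniformly biased, with overwhelming quenched probability the walk does not backtrack by more than $O(\log t)$ past its running minimum, so effectively all holding happens at sites $x$ with $|x|\le t$ where $\mu(x)\le M_t$. The number of jumps performed up to the (random) stopping time when $\{X_t<vt\}$ is witnessed is at least of order $t/M_t$ would be \emph{too few} to be slow — the right way to phrase it: the walk makes $N_t$ jumps by time $t$, each jump-clock at the current site $x$ rings at rate $1/\mu(x)\ge 1/M_t$, so $N_t$ stochastically dominates a Poisson$(\Theta(t)/M_t)$ variable; but being slow (displacement $<vt<v_{\mathbf P}t$) while the underlying discrete chain is ballistic with speed $v_{\mathbf P}$ forces $N_t=o(t)$ except on a discrete-chain large-deviation event of probability $\exp\{-\Theta(t)\}$ (available from \cite{Sol75}/\cite{DGZ04}). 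Hence $P^{\omega,\mu}_0(X_t<vt)\le \P\big(\text{Poisson}(\Theta(t)/M_t)=o(t)\big)+\exp\{-\Theta(t)\}\le \exp\{-\Theta(t)/M_t\}$, which is the right $\lesssim_{\log}$ bound after relabeling $\epsilon$. Finally, when Lemma~1.3 applies, $\max_{0\le x\le t}\mu(x)\asymp g^{-1}(\log t)$ without the $\epsilon$-loss, and replacing $M_t$ and the lower-bound trap size by $g^{-1}(\log t)$ (up to the fixed constant $l$, which is absorbed by $\asymp_{\log}$) upgrades both estimates to $\asymp_{\log}\exp\{-t/g^{-1}(\log t)\}$.

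The main obstacle is the upper bound, specifically controlling the \emph{geometry of the trajectory} quenched-a.s.: one must rule out the walk exploiting many moderately large traps, or wandering far to the left into unusually large $\mu$ values outside $[0,t]$, or the discrete chain making anomalously few jumps for environmental (not clock) reasons. The first is handled by the crude bound $\mu(x)\le M_t$ uniformly on $[-t,t]$ (any excursion to $|x|>t$ before time $t$ already costs $\exp\{-\Theta(t)\}$ by ballisticity); the last is exactly the discrete-chain slowdown estimate, which is genuinely exponential and hence negligible compared to the sub-exponential target — this is where the hypotheses (P)/(I)/(W) and, in the borderline (W)-with-$\alpha=1$ case, the Cramér condition (C) enter, ensuring $t/g^{-1}(\log t)=o(t)$ so that the environmental cost truly dominates.
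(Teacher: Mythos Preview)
Your lower bound matches the paper's: locate via Lemma~\ref{lem:unstable} (or Lemma~\ref{lem:stable}) a site $x\in[0,vt)$ with $\mu(x)\gtrsim g^{-1}((1-\epsilon)\log t)$ and wait there. The paper is in fact terser, observing that since the nearest-neighbor walk must pass through $x$ to exceed it, the event $\{\tau_1(x)\ge t\}$ already forces $X_t\le x<vt$, so no separate travel-time budget is needed.

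Your upper bound, however, has a genuine gap. The domination $N_t\succeq\mathrm{Poisson}(t/M_t)$ is correct but points the wrong way for your purpose: it gives $P(N_t\le ct)\le P(\mathrm{Poisson}(t/M_t)\le ct)$, and since the Poisson mean $t/M_t$ is $o(t)\ll ct$, the right-hand side tends to $1$, not to $\exp\{-\Theta(t/M_t)\}$. Put differently, the bound $\mu(x)\le M_t$ only says the walk is at least as fast as a homogeneous rate-$1/M_t$ walk, and for \emph{that} walk slowdown is typical because $M_t\to\infty$; so the crude maximum bound cannot by itself rule out ``many moderately large traps''. What is missing is control of the \emph{mean} of the $\mu$-weighted occupation time along the trajectory: one needs, for some $u\in(v,v_{\mathbf P})$, that $E^\omega_0[A_\mu(H(ut))]<(1-\delta)t$, which encodes both $u<v_{\mathbf P}$ and the fact that the path-average of $\mu$ is $1$, not $M_t$. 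The paper obtains the upper bound by a Chernoff/Feynman--Kac argument with tilt $\epsilon/M_t$ (the condition $\mu\le M_t$ enters only to keep the exponential moment finite), reducing matters to the a.s.\ inequality $\sum_y c^\omega(y)\mu(y)\le(1-\delta)t$ for bounded Green-function weights $c^\omega(y)$ satisfying $\sum_y c^\omega(y)\le E^\omega_{-\epsilon t}[H(ut)]$. This is then verified $\mathbf P\otimes\P$-a.s.\ by combining $t^{-1}E^\omega_{-\epsilon t}[H(ut)]\to (u+\epsilon)/v_{\mathbf P}<1$ with a strong law of large numbers for the weighted sum $\sum_y c^\omega(y)(\mu(y)-1)$. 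Your sketch contains no analogue of this second ingredient, and without it the argument does not close.
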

%%%%%%%%%%%%%%%%%%%%%%%%%%%%%%%%%%%%%%%%%%%%%%%%%%%%%%%%%%%%%%%%%%%%%%%
Our second result is the corresponding annealed slowdown estimate. 
To this end, for each $t$, let $h(t)$ be the largest $h > 0$ satisfying
\begin{equation}
 \frac{t}{h}\ge g(h)-\log t.
\label{eq:h}
\end{equation}
Such $h$ exists for all large $t$. Indeed, 
when $t$ is large, the inequality \eqref{eq:h} 
holds for $h=\log t$ and fails for $h=t$, whereas 
per fixed $t>0$ the left hand side of \eqref{eq:h} 
is decreasing in $h$ and 
its right hand side eventually increasing in $h$.
\begin{remark}
\label{h}
In general, both $h(t)$ and $g(h(t))$ grow sub-linearly in $t$. 
Furthermore, it is straightforward to check that
\begin{equation}
h(t)\sim
\begin{cases}
\frac{t}{(\alpha-1)\log t}& \textrm{for (P)},\\[5pt]
t^\frac{1}{\alpha+1}\ell(t)& \textrm{for (W)},
\end{cases}
\end{equation}
where the function $\ell(t)$ is slowly varying at $\infty$. 
There is no such simple formula in the case (I) but for a representative 
example $g(r)=(\log r)^\beta$ ($\beta>1$), we have
\begin{equation}
 h(t)\sim \frac{t}{(\log t)^{\beta}}.
\end{equation}
\end{remark}
%%%%%%%%%%%%%%%%%%%%%%%%%%%%%%%%%%%%%%%%%%%%%%%%%%%%%%%%%%%%%%%%%%%%%%%
\begin{theorem}
\label{annealed}
Let $\mathbf{P}$ be a uniformly biased environment: 
$\mathbf{P}\textrm{-\upshape{essinf}}\,{\omega(0)}>1/2$.
Suppose $v\in (0,v_\mathbf{P})$. Then for $h(\cdot)$ of~\eqref{eq:h}, 
\begin{equation}
\mathbf{P}\otimes \P\left[ P^{\omega,\mu}_0\left(X_t<vt\right)\right]
\asymp_{\log}\exp\left\{-\frac{t}{h(t)}\right\},
\end{equation}
where $\mathbf{P}\otimes \P[\,\cdot\,]$ denotes the expectation with respect to $\mathbf{P}\otimes \P$.
\end{theorem}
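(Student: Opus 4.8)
The strategy is to transfer the quenched bound of Theorem~\ref{quenched} to the annealed setting by optimizing over the ``cost'' of creating a favourable environment. The heuristic is clear: to slow the walk down on $[0,vt]$ it is cheapest to plant a single trap, i.e.\ a site $x_0$ with $\mu(x_0)\approx h$ for some large $h$, and then wait there. On the event that such a trap exists, the walk reaches $x_0$ in time $O(t)$, and once there the holding time at $x_0$ is of order $h$, so it takes roughly $t/h$ i.i.d.\ geometric attempts (each costing an exponential clock of rate $\asymp 1/h$) before the walk escapes to the right of $vt$; hence the quenched slowdown probability given such an environment is $\exp\{-\Theta(t/h)\}$. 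The $\P$-cost of producing one such site among the $O(t)$ visited is $t\,\P(\mu(0)>h)=t\,e^{-g(h)}=\exp\{-(g(h)-\log t)\}$. Balancing the two exponents $t/h$ and $g(h)-\log t$ leads exactly to the defining relation \eqref{eq:h} for $h(t)$, and both terms are then $\Theta(t/h(t))$; this gives the annealed lower bound.

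\emph{Lower bound.} Concretely, fix $h=h(t)$ and consider the $\mathbf{P}\otimes\P$-event $A_t$ that $\mu(x_0)\ge h$ for some $x_0\in[\epsilon t, 2\epsilon t]$ (say) while $\omega$ is unrestricted and the other $\mu(x)$ are typical; by independence $\P(A_t)\ge \exp\{-(1+o(1))(g(h)-\log t)\}$. On $A_t$ I will bound $P^{\omega,\mu}_0(X_t<vt)$ from below by the probability that the walk (i) travels from $0$ to $x_0$ within time $t/2$ — a positive-speed event with probability bounded away from $0$ uniformly in the relevant environments, using the uniform bias — and then (ii) remains in $[0,vt)$ for the remaining time $t/2$, for which it suffices that the walk be ``reflected'' at $x_0$ for that long; since the jump rate out of $x_0$ is at most $C/h$, the probability the exponential holding clock at $x_0$ does not ring before time $t/2$ is $\exp\{-\Theta(t/h)\}$, and one controls excursions to the left of $x_0$ by the uniform bias. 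Combining, $\mathbf{P}\otimes\P[P^{\omega,\mu}_0(X_t<vt)]\ge \exp\{-\Theta(t/h(t))\}$.

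\emph{Upper bound.} For the matching upper bound I will split on the size of $M_t:=\max_{0\le x\le vt}\mu(x)$. On the event $\{M_t\le h_1\}$ with $h_1$ slightly larger than $h(t)$, Theorem~\ref{quenched}'s argument (or rather its quenched upper bound input: a coupling with a speeded-up walk whose holding times are all $\le h_1$) gives $P^{\omega,\mu}_0(X_t<vt)\le \exp\{-\Theta(t/h_1)\}$, contributing at most $\exp\{-\Theta(t/h(t))\}$ after taking expectation. On the complementary event $\{M_t>h_1\}$ one simply uses $P^{\omega,\mu}_0(\cdot)\le 1$ and bounds $\P(M_t>h_1)\le vt\,\P(\mu(0)>h_1)=\exp\{-(g(h_1)-\log(vt))\}$; choosing $h_1$ so that $g(h_1)-\log t$ is a large multiple of $t/h(t)$ — which is possible because $g(h)-\log t$ crosses $t/h$ at $h=h(t)$ and thereafter dominates it — makes this term negligible. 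Summing the two contributions yields the upper bound. The regularity facts in Remark~\ref{h} (sub-linear growth of $h(t)$ and $g(h(t))$, and the explicit asymptotics) are used to check that such an intermediate $h_1$ exists and that the two exponents are genuinely comparable; here the distinction between the classes (P), (I), (W)—and the role of the Cramér condition (C) when $\alpha=1$ in (W)—enters, since (C) is what prevents the holding time at a single typical-scale site from being too heavy-tailed to apply the speeded-up coupling.

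\emph{Main obstacle.} The delicate point is the quenched upper bound on the ``good'' event $\{M_t\le h_1\}$: one must show that \emph{no} configuration of $\mu$-values all bounded by $h_1$, together with \emph{any} uniformly-biased $\omega$, can slow the walk down more than $\exp\{-\Theta(t/h_1)\}$. This requires a robust comparison argument — e.g.\ time-changing the walk so every holding rate is $\ge 1/h_1$ and then invoking a uniform lower bound on the speed of one-dimensional biased walks via hitting-time estimates — and making the implied constants independent of the environment. This is essentially the content already needed for Theorem~\ref{quenched}, so the annealed proof will largely be an exercise in bookkeeping the two competing exponents and verifying, case by case in (P)/(I)/(W), that the balance point is exactly $h(t)$.
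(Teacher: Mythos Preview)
Your lower bound is fine and matches the paper's. The gap is in the upper bound: the claim that on $\{M_t\le h_1\}$ one has $P^{\omega,\mu}_0(X_t<vt)\le\exp\{-\Theta(t/h_1)\}$ uniformly over all such configurations is \emph{false}. Take for instance $\mu(y)=h_1$ for every $y\in[0,ut]$. Then (in the paper's notation) $A_\mu(H(ut))=h_1 H(ut)$ has mean $\approx h_1\cdot ut/v_{\mathbf P}\gg t$ since $h_1=h(t)\to\infty$, so $P_0^\omega(A_\mu(H(ut))\ge t)\to 1$ and the slowdown probability is close to $1$, not $\exp\{-\Theta(t/h_1)\}$. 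Your ``speeded-up coupling'' cannot rescue this: bounding each jump rate below by $1/h_1$ only gives $A_\mu(H(ut))\le h_1 H(ut)$, hence $P_0^\omega(A_\mu(H(ut))\ge t)\le P_0^\omega(H(ut)\ge t/h_1)$, and the right side is not a large deviation because $t/h_1\ll E_0^\omega[H(ut)]$.

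What is actually needed, and what the paper does, is a second ``good'' condition besides the max bound: the weighted sum $\sum_{y} c^\omega(y)\mu(y)\le(1-\delta)t$, where $c^\omega(y)$ is essentially the expected occupation time at $y$ (this makes $\{A_\mu(H(ut))\ge t\}$ a genuine large deviation, and then an exponential Chebyshev/Feynman--Kac bound yields $\exp\{-\Theta(t/h_1)\}$). One must then estimate the $\P\otimes\mathbf P$-probability that this weighted sum exceeds $(1-\delta)t$; since $\E[\mu(y)]=1$, the mean of the sum is $E^\omega_0[H(ut)]\approx ut/v_{\mathbf P}<(1-2\delta)t$, so this is a large deviation for a weighted sum of heavy-tailed i.i.d.\ variables. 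Showing that this probability is $\lesssim_{\log}\exp\{-(g(h(t))-\log t)\}$ is the real technical core (it is where (P)/(I)/(W) and the Cram\'er condition (C) enter, via Nagaev-type bounds), and it is entirely absent from your decomposition. Your remark that ``(C) prevents the holding time at a single typical-scale site from being too heavy-tailed'' misidentifies the role of (C): it is needed for the \emph{sum} large deviation, not for any single-site coupling.
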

%\begin{remark}
%\RF{(I think we don't need this remark anymore. The assumption for the distribution of $\mu$ is now fairly general.)}
%As is mentioned above, the assumption~\eqref{eq:g-def} with (P), (I) or (W) is rather restrictive. 
%However, it is easy to adapt our arguments for establishing somewhat 
%weaker results in a more general setting. For example, if we only assume 
%\begin{equation}
%\P(\mu(0)>r)=r^{-\alpha+o(1)} \textrm{ as }r\to\infty,
%\end{equation}
%then we have 
%\begin{equation}
%P^{\omega,\mu}_0\left(X_t<vt\right)
%=\exp\left\{-t^{\frac{\alpha-1}{\alpha}+o(1)}\right\}
%\textrm{ as }t\to\infty
%\end{equation}
%$\mathbf{P}$-almost surely, as well as
%\begin{equation}
% \mathbf{P}\otimes \P\left[ P^{\omega,\mu}_0\left(X_t<vt\right)\right]
%=t^{-\alpha+1+o(1)} \textrm{ as }t \to\infty.
%\end{equation}
%\end{remark}
%%%%%%%%%%%%%%%%%%%%%%%%%%%%%%%%%%%%%%%%%%%%%%%%%%%%%%%%%%%%%%%%%%%%%%%
\subsection{Related works}
%The results in this work covers \emph{uniformly biased} random walk in random environment with spatially inhomogeneous holding times. 
Sub-exponential tail estimates are established in~\cite{DPZ96}
for the annealed slowdown probability of
the random walk in random environment $(X_n)_{n \ge 0}$ governed by 
\eqref{transition}. 
When the environment assumes both positive and negative drift, that is, 
$\mathbf{P}\textrm{-\upshape{essinf}}\,\omega(0) <1/2<
\mathbf{P}\textrm{-\upshape{esssup}}\,\omega(0)$, the annealed slowdown probability exhibits a polynomial decay. 
%In particular, the holding time can cause a visible effect only if the distribution of $\mu$ has power law tail. In fact, it seems that the results as precise as in~\cite{DPZ96} can be proved with a minor modification of the argument thereof. Then one can try to prove the quenched result following a block argument as in~\cite{GZ98}. 
In the case of positive and zero drift, that is, 
$\mathbf{P}\textrm{-\upshape{essinf}}\,\omega(0) =1/2$, under an additional assumption that $\mathbf{P}(\omega(0)=1/2)>0$, the slowdown probability is shown to decay stretched exponentially with exponent $1/3$. 
The rate of decay of the corresponding quenched slowdown probability 
is determined in~\cite{GZ98}, based on the annealed result and the block argument. In the case of positive and zero drift, both annealed and quenched results are refined to the precision of the usual large deviation principle in~\cite{PPZ99} and~\cite{PP99}, respectively. On the other hand, in the case of positive and negative drift, it has recently been shown in~\cite{AP16} that the leading order of the quenched slowdown probability oscillates and hence does not satisfy a large deviation principle. For more details, we refer the reader to a survey article~\cite{GZ99} as well as the introduction of~\cite{AP16}. 

We focus here on the uniformly biased situation with (inhomogeneous) holding times. Indeed, for uniformly biased environments, the result of~\cite{GdH94}
shows that without holding times the slowdown probability decays exponentially. Thus, the sub-exponential decay of the slowdown probability is caused in this setting
solely by the inhomogeneity of holding times. 
%It is interesting to study the slowdown estimates in the other cases, with holding times. 
Note that in case of positive and negative drift, since the annealed slowdown probability decays polynomially without holding times, the holding time can cause a visible effect only if $\mu$ has a power law tail. Similarly, in the case of positive and zero drift, the most natural choice of $\mu$ would be the Weibull distribution. We leave the latter two cases for future research. 

Finally, \cite{Szn99,Szn00,Ber12} provide estimates for the 
decay rate of the slowdown probability for random walk 
in random environment in higher dimensions. While it is interesting to see how the holding times affect such slowdown probabilities, 
our method relies on a certain renewal structure which is 
limited to the one dimensional setting.
 
%%%%%%%%%%%%%%%%%%%%%%%%%%%%%%%%%%%%%%%%%%%%%%%%%%%%%%%%%%%%%%%%%%%%%%%
\subsection{Outline}
Section~\ref{S:LB} provide the relatively easy proofs of our lower bounds,
where in both
quenched and annealed settings, we simply let the random walk stay until time $t$ at the site of the highest $\mu$-value within $[0,vt-1]$. In the quenched case, the highest $\mu$-value behaves as in Lemmas~\ref{lem:stable} and~\ref{lem:unstable}, 
while 
in the annealed setting, we can make it larger at a suitable cost in $\log \P$-probability, so we optimize the sum of the corresponding 
cost and gain. 

The derivation of the upper bounds is more involved. Since a 
sub-exponential slowdown decay for a
uniformly biased random walk can only be caused by the inhomogeneity 
of the holding times $\mu$, we introduce in Section~\ref{S:prelim}
a suitable time change and thereby reduce such upper bounds on the slowdown decay to a tail bound for certain additive functionals. Section~\ref{S:onGood} 
provides our main technical contribution, showing that conditioning on 
some good events with respect to $\omega$ and $\mu$ yields the stated 
upper bounds. 
Finally, in Section~\ref{sec-UB}, we show that
\begin{enumerate}
 \item in the quenched setting, the good event has probability one; 
 \item in the annealed setting, the good event has probability comparable to the upper bound in Theorem~\ref{annealed}. 
\end{enumerate}

\section{Lower bounds}\label{S:LB}
\begin{proof}
[Proof of the lower bound in Theorem~\ref{quenched}]
Let us define a regularly varying function
 
\begin{equation}
 M(t)=
 \begin{cases}
 g^{-1}((1-\epsilon)\log t),&\textrm{or}\\
 g^{-1}(\log t),&\textrm{when \eqref{eq:stability} holds.} 
 \end{cases}
 \end{equation}
Then due to Lemmas~\ref{lem:stable} and~\ref{lem:unstable}, the following holds $\P$-almost surely:
for all sufficiently large $t$, there exist a point $x\in[0,vt-1]$ and $c_v>0$ (which is independent of $x$) such that
$\mu(x)\ge c_vM(t)$. It follows that 
\begin{equation}
 \RW_0(X_t < vt)\ge \RW_0(\tau_1(x)\ge t)
 \ge \exp\set{-\frac{t}{c_vM(t)}},
\end{equation}
where $\tau_1(x)$ is the first holding time at $x$, which is distributed
as $\textrm{Exp}(1/\mu(x))$. 
\end{proof}
\begin{proof}
[Proof of the lower bound in Theorem~\ref{annealed}]
As in the quenched case, if there is a point 
$x\in[0,vt-1]$ such that $\mu(x)\ge h(t)$, then we can slow
the random walk by using the first holding time at $x$. 
Therefore, by the definition of $h(t)$, we have 
\begin{equation}
\begin{split}
\mathbf{P}\otimes \P\left[ P^{\omega,\mu}_0(X_t < vt) \right]
&\ge \exp\set{-\frac{t}{h(t)}}
 \P\left(\max_{x\in[0,vt-1]}\mu(x)\ge h(t) \right)\\
&\asymp_{\log}\exp\set{-\frac{t}{h(t)}+\log t-g(h(t))}\\
&\ge \exp\set{-\frac{2t}{h(t)}},
\end{split}
\end{equation}
which is the desired lower bound.
\end{proof}
%%%%%%%%%%%%%%%%%%%%%%%%%%%%%%%%%%%%%%%%%%%%%%%%%%%%%%%%%%%%%%%%%%%%%%%

\section{Preliminaries for upper bounds}
\label{S:prelim}
\subsection{Reduction to tail estimate for additive functional}
\label{additive}
Let us first translate the problem in terms of the hitting time $H(x)$ of 
$x$ by our process.
For any $u>v$, on the slowdown event $\{X_t<vt\}$, either the walk hit 
$ut$ before time $t$ and thereafter go back to $(-\infty,vt]$ or 
it does not reach $ut$ before time $t$. Hence,
\begin{equation}
\label{hitting}
\begin{split}
P^{\omega,\mu}_0(X_t<vt) 
\le P^{\omega,\mu}_0(H(ut)< t, X_t<vt)+P^{\omega,\mu}_0(H(ut)\ge t). 
\end{split}
\end{equation}
The first term on the right hand side is exponentially small in $t$. Indeed,
since the random walk then must backtrack for length $(u-v)t$, 
it follows that 
\begin{equation}
\begin{split}
P^{\omega,\mu}_0(H(ut)< t, X_t<vt)
& \le \RW_{ut}(H(vt)<\infty)\\
& = P^\omega_{ut}(H(vt)<\infty) \le \exp\{-c(u-v)t\}
\end{split}
\end{equation} 
(see Lemma~\ref{exp-decay} for the last inequality). To handle the other
term on the right side of \eqref{hitting} we utilize the following time 
change description of $P^{\omega,\mu}$. 
Let $(\{S_t\}_{t\ge 0}, \{P_x^\omega\}_{x\in\Z})$ %\RF{(Referee suggested $P_0^\omega$ but I think this is more consistent.)} 
be the continuous time random walk in
random environment, that is, it jumps after $\rm{Exp}(1)$ time regardless of
its position and where it moves obeys the rule~\eqref{transition}. Then,
define the strictly increasing positive continuous additive functional
\begin{equation}\label{eq:amir1}
 A_\mu(t)=\int_0^t \mu(S_r)\dd r
\end{equation}
and denote by $A^{-1}_\mu$ its inverse. 
Since the process $\{S_{A_\mu^{-1}(t)}\}_{t\ge 0}$ 
has under $P_0^\omega$ the same law as $\{X_t\}_{t\ge 0}$ has 
under $P^{\omega,\mu}_0$, the second term in~\eqref{hitting} is precisely 
\begin{equation}
 P_0^\omega(H(ut)\ge A_\mu^{-1}(t))
= P_0^\omega(A_\mu(H(ut))\ge t).
\end{equation}
%where $H^S(ut)$ is the first passage time through $ut$ for the random walk $\{ S_t \}_{t \geq 0}$.

\subsection{Green function estimates}
The Green function for $\{S_t\}_{t \geq 0}$ is defined 
for $-\infty \leq a<x<b \leq \infty$ as 
\begin{equation}
G_{(a,b)}^\omega(x,y)
=E_x^\omega\left[\int_0^{H(a)\wedge H(b)}1_{\{y\}}(S_r)\dd r\right].
\end{equation}
Since our random walk is transient, this quantity
is finite 
$\mathbf{P}$-almost surely. 
\begin{lemma}
\label{exp-decay}
There exist positive constants $c_1$, $c_2$ 
and a function $\eta(\epsilon)\to 0$ as $\epsilon\to 0$ that depend only on 
$\mathbf{P} \textrm{-essinf }\,\omega(0)>1/2$ 
such that the following hold $\mathbf{P}$-almost surely:
\begin{enumerate}
\item for any $z\in\Z$, 
$E^\omega_{z-1}[\exp\{\epsilon H(z)\}]\le 1+\eta(\epsilon)$, 
%where $o_\epsilon(1)$ denotes a quantity that vanishes as 
%$\epsilon\downarrow 0$ uniformly in $\omega$ and $z$;
\item for all $x \geq y$,
 both $P^{\omega}_x(H(y)<\infty)$ and $G_{(-\infty,\infty)}^\omega(x,y)$
 are bounded by $c_1\exp\{-c_2(x-y)\}$.
\end{enumerate}
\end{lemma}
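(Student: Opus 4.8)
The plan is to dominate the random walk in the random environment by a homogeneous nearest-neighbour walk whose right-jump probability is $p:=\mathbf{P}\textrm{-essinf}\,\omega(0)>1/2$; all constants produced will be functions of $p$ only. Set $q:=1-p<1/2$ and $\rho:=q/p\in(0,1)$, and work on the full-measure event $\{\omega(v)\ge p\text{ for all }v\in\Z\}$. First I would construct a monotone coupling of the embedded discrete-time chain of $\{S_t\}$, started from an arbitrary site $x$, with the homogeneous $p$-walk started from $x$: at each step let both chains use the same $\mathrm{Unif}(0,1)$ variable $U$, the $\omega$-chain jumping right iff $U<\omega(\cdot)$ and the homogeneous chain iff $U<p$. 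Because $\omega(\cdot)\ge p$, a short case analysis on the relative positions of the two chains shows that the $\omega$-chain always stays weakly to the right of the homogeneous chain.

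Granting the coupling, part (ii) follows quickly. If the $\omega$-chain hits some $y\le x$, then the homogeneous chain --- weakly below it and started above $y$ --- must, by nearest-neighbour continuity, also have hit $y$; hence $P^\omega_x(H(y)<\infty)$ is at most the corresponding quantity for the homogeneous $p$-walk, which equals $\rho^{x-y}$ by the gambler's-ruin formula. For the Green function, decompose over the successive visits of $\{S_t\}$ to $y$: each such visit contributes an independent $\mathrm{Exp}(1)$ holding time of mean one, so $G^\omega_{(-\infty,\infty)}(x,y)$ equals the expected number of visits of the embedded chain to $y$, namely $P^\omega_x(H(y)<\infty)\,(1-\theta_y)^{-1}$ with $\theta_y:=P^\omega_y(\text{the chain returns to }y)\le\omega(y)\rho+(1-\omega(y))\le1-p(1-\rho)<1$. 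This gives the common bound $c_1\rho^{x-y}=c_1e^{-c_2(x-y)}$ for both quantities, with $c_1=[p(1-\rho)]^{-1}>1$ and $c_2=\log(1/\rho)$.

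For part (i), decompose the continuous-time hitting time under $P^\omega_{z-1}$ as $H(z)=\xi_1+\cdots+\xi_N$, where $N$ is the number of embedded steps needed to pass from $z-1$ to $z$ and the $\xi_i$ are i.i.d.\ $\mathrm{Exp}(1)$ holding times independent of the embedded path. By transience to the right, $N<\infty$ a.s., and conditioning on $N$ gives $E^\omega_{z-1}[e^{\epsilon H(z)}]=E^\omega_{z-1}[(1-\epsilon)^{-N}]$ for $\epsilon\in(0,1)$. By the coupling, whenever the homogeneous $p$-walk reaches $z$ within $n$ steps so does the $\omega$-chain, whence $N$ is stochastically dominated by the first-passage step count $\widehat N$ of the homogeneous $p$-walk; therefore, for $s\ge1$, $E^\omega_{z-1}[s^N]\le E[s^{\widehat N}]=f(s):=\bigl(1-\sqrt{1-4pqs^2}\,\bigr)/(2qs)$, the classical first-passage generating function, which is finite for $1\le s<(2\sqrt{pq})^{-1}$ (note $4pq<1$) and satisfies $f(1)=1$. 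Taking $s=(1-\epsilon)^{-1}$ gives $E^\omega_{z-1}[e^{\epsilon H(z)}]\le f\bigl((1-\epsilon)^{-1}\bigr)=:1+\eta(\epsilon)$ for all sufficiently small $\epsilon>0$, and $\eta(\epsilon)\to f(1)-1=0$ as $\epsilon\to0$, with $\eta$ depending only on $p$.

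The only step requiring real attention is the monotone coupling: one must check that it is well defined and preserves the ordering even when the two chains occupy different sites, which is a brief case distinction using $\omega(\cdot)\ge p$. Everything else reduces to textbook gambler's-ruin and first-passage computations for the homogeneous walk. If one prefers to avoid the coupling, part (ii) can instead be read off the standard scale function of the environment, whose increments decay geometrically since $\frac{1-\omega(k)}{\omega(k)}\le\rho$, and part (i) from the first-step recursion $\psi_x(s)=s\omega(x)/\bigl(1-s(1-\omega(x))\psi_{x-1}(s)\bigr)$ satisfied by the step-count generating function $\psi_x$ for passing from $x$ to $x+1$, supplying the missing boundary at $-\infty$ by a monotone limit over chains killed upon first reaching $x-m$; but the coupling handles both cases at once, so that is the route I would follow.
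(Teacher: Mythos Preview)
Your proposal is correct and follows essentially the same route as the paper: both arguments couple the walk in environment $\omega$ with the homogeneous biased walk at level $p=\mathbf{P}\textrm{-essinf}\,\omega(0)$, use the resulting monotone ordering to dominate $H(z)$ by the homogeneous first-passage time for part~(i), and combine the gambler's-ruin bound on $P^\omega_x(H(y)<\infty)$ with a uniform bound on the return probability at $y$ for part~(ii). The only cosmetic differences are that the paper couples directly in continuous time and leaves all constants implicit, whereas you pass through the embedded discrete-time chain and make the generating function and the values of $c_1,c_2$ explicit.
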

\begin{proof}
Let $(\{\underline{S}_t\}_{t\ge 0},\{ P_x^{\underline{\omega}}\}_{x \in \Z})$ be the biased random walk corresponding to the deterministic 
environment $\omega\equiv \mathbf{P}\textrm{-essinf }\,\omega(0)>1/2$. 
It is standard to construct a coupling $(S_t,\underline{S}_t)$ so that the 
two walks jump at the same time and $\underline{S}_t\le S_t$ for all $t\ge 0$. 
The first assertion (i) readily follows from this coupling since 
for any $\epsilon >0$,
\begin{equation}
\begin{split}
 \sup_{z\in \Z}  E^\omega_{z-1}[\exp\{\epsilon H(z)\}]
&\le 
 E^{\underline{\omega}}_{0}[\exp\{\epsilon H(1)\}]
=: 1+\eta(\epsilon).
\end{split}
\end{equation} 
Next, turning to the proof of (ii), by our coupling
$P_z^\omega(H(z-1)=\infty) \ge \eta$, 
uniformly in $\omega$ and $z\in\Z^d$, where
$\eta := P_z^{\underline{\omega}}(H(z-1)=\infty)$ 
is positive. Hence, by the strong Markov property, 
\begin{equation}\label{H-bd}
P^{\omega}_x(H(y)<\infty)
=\prod_{z=x}^{y-1}P_z^\omega(H(z-1)<\infty) \le (1-\eta)^{y-x}
\end{equation}
as claimed. 
Another application of the strong Markov property yields the identity
\begin{equation}\label{G-ident}
G_{(-\infty,\infty)}^\omega(x,y)
=P^{\omega}_x(H(y)<\infty)\, G_{(-\infty,\infty)}^\omega(y,y).
\end{equation}
Further, by our coupling with $\{P^{\underline{\omega}}_x\}_{x\in\Z}$,
the return probability of the process $(S_t)_{t \ge 0}$ 
is bounded away from one, uniformly in its starting point $S_0=y$ 
and the environment $\omega$.
This implies same uniform boundedness of $G_{(-\infty,\infty)}^\omega(y,y)$, 
which in view of \eqref{H-bd} and \eqref{G-ident} completes the proof.
\end{proof}
%%%%%%%%%%%%%%%%%%%%%%%%%%%%%%%%%%%%%%%%%%%%%%%%%%%%%%%%%%%%%%%%%%%%
\section{Upper bound on a good event}
\label{S:onGood}
Let us fix $u\in(v,v_{\mathbf{P}})$ and a regularly varying and increasing function 
$M$ with $\lim_{t\to\infty}M(t)=\infty$. Throughout this section, we will fix $\omega$ and $\mu$ and assume that they satisfy the following conditions:
\begin{equation}
\frac{1}{M(ut)}\max_{-\epsilon t\le z\le ut}\{\mu(z)\}\le 1,
\label{cond:max}
\end{equation}
and there exists $\delta\in(0,1)$ such that
\begin{equation}
\sum_{y\in (-\epsilon t, ut)} c^\omega_{-\epsilon t}(y) \mu(y) 
\le (1-\delta)t
\label{cond:w-sum}
\end{equation}
holds for 
\begin{equation}
 c^\omega_{-\epsilon t} (y) := \sum_{x\in (y,ut]} G_{(-\epsilon t,x)}^{\omega} (x-1,y) 
\end{equation}
and all sufficiently small $\epsilon>0$.
Let us introduce 
\begin{equation}\label{eq:feps-def}
f_{\epsilon,t}(x,y)=
E_x^\omega\left[\exp\set{\frac{\epsilon }{M(ut)}
\int_0^{H(y)\wedge H(-\epsilon t)}
\mu(S_r)\dd r}\right]
\end{equation}
for $0\le x<y\le ut$. 
By the strong Markov property and the fact $f_{\epsilon,t}(x,y)\ge 1$, this can be shown to be sub-multiplicative in the following sense: for any $0\le x< y < z \le ut$, 
\begin{equation}
f_{\epsilon,t}(x,z) \le f_{\epsilon,t}(x,y)f_{\epsilon,t}(y,z). 
\label{eq:sub-mult}
\end{equation}
%(This part is related to referee's comment to~\eqref{gen-fcn}. A proof is given in \TeX\ file in case you want to see.)
\if0
\begin{equation}
\begin{split}
E_x^\omega&\left[\exp\set{\frac{\epsilon }{M(ut)}
\int_0^{H(z)\wedge H(-\epsilon t)}\mu(S_r)\dd r}\right]\\
&= E_x^\omega\left[\exp\set{\frac{\epsilon }{M(ut)}
\int_0^{H(z)\wedge H(-\epsilon t)}\mu(S_r)\dd r}\colon H(-\epsilon t)>H(y)\right]\\
&\quad+E_x^\omega\left[\exp\set{\frac{\epsilon }{M(ut)}
\int_0^{H(-\epsilon t)}\mu(S_r)\dd r}\colon H(-\epsilon t)\le H(y)\right]\\
&=E_x^\omega\left[\exp\set{\frac{\epsilon }{M(ut)}
\int_0^{H(y)}\mu(S_r)\dd r} 
f_{\epsilon,t}(y,z) \colon H(-\epsilon t)>H(y)\right]\\
&\quad+E_x^\omega\left[\exp\set{\frac{\epsilon }{M(ut)}
\int_0^{H(-\epsilon t)}\mu(S_r)\dd r}\colon H(-\epsilon t)\le H(y)\right]\\
&\le E_x^\omega\left[\exp\set{\frac{\epsilon }{M(ut)}\int_0^{H(y)\wedge H(-\epsilon t)}\mu(S_r)\dd r} \right]
f_{\epsilon,t}(y,z), 
\end{split}
\end{equation}
where in the last line, we have used that $f_{\epsilon,t}(y,z)\ge 1$.
\fi
The assumption~\eqref{cond:max} and Lemma~\ref{exp-decay} imply
\begin{equation}
\begin{split}
f_{\epsilon,t}(x,y)
& \le E_x^\omega\left[\exp\set{\epsilon H(y)}\right]
=\prod_{z=x+1}^yE_{z-1}^\omega\left[\exp\set{\epsilon H(z)}\right]
\leq (1+\eta(\epsilon))^{y-x}.
\end{split}
\end{equation}
By the Feynman--Kac formula (Theorem~6.7 in~\cite{Dem05}), we have 
\begin{equation}
\begin{split}
f_{\epsilon,t}(x-1,x)
&= 1+ \frac{\epsilon }{M(ut)}
 \sum_{y\in (-\epsilon t,x)} G^\omega_{(-\epsilon t,x)}(x-1,y)
\mu(y) f_{\epsilon,t}(y,x)\\
&\le 1+ \frac{\epsilon }{M(ut)} 
 \sum_{y\in (-\epsilon t,x)} G^\omega_{(-\epsilon t,x)}(x-1,y)\mu(y) 
 (1+\eta(\epsilon))^{x-y}.
\end{split}
\label{FK}
\end{equation}
Now using $\log (1+x)\le x$ for $x\ge 0$, we obtain from~\eqref{eq:sub-mult} and \eqref{FK} that 
%\RF{(In the first line below, I think referee is right. This is due to the extra $\wedge H(-\epsilon t)$.)}
\begin{equation}
\begin{split}
\log f_{\epsilon,t}(0,ut)  
& \le \sum_{1 \leq x \leq ut}\log f_{\epsilon,t}(x-1,x)\\
& \le \sum_{1 \leq x \leq ut}
% \log \left(1+ 
\frac{\epsilon}{M(ut)} 
 \sum_{y\in (-\epsilon t,x)} G^\omega_{(-\epsilon t,x)}(x-1,y)\mu(y) 
(1+\eta(\epsilon))^{x-y}
% \right)
\\
&\le \frac{\epsilon }{M(ut)}
 \sum_{y\in (-\epsilon t,ut)}\mu(y) 
\sum_{x \in (y,ut]}G^\omega_{(-\epsilon t,x)}(x-1,y)(1+\eta(\epsilon))^{x-y}.
\end{split}
\label{gen-fcn}
\end{equation}
%We next show that the above sum in $x\in (y,ut]$ is well approximated by $c^\omega_{-\epsilon t} (y)$. 
Next, by Lemma~\ref{exp-decay}(ii) we have that 
uniformly in $\omega$, $t$ and $y$,
as $\epsilon\to 0$, 
\begin{equation}
\begin{split}
0 & \le \sum_{x \in (y,ut]}G^\omega_{(-\epsilon t,x)}(x-1,y)(1+\eta(\epsilon))^{x-y}
-c^\omega_{-\epsilon t} (y)\\
%&\quad=\sum_{x \in (y,ut]}G^\omega_{(-\epsilon t,x)}(x-1,y)
%((1+\eta(\epsilon))^{x-y}-1)\\
& \le \sum_{x \in (y,\infty)}c_1e^{-c_2(x-y)}((1+\eta(\epsilon))^{x-y}-1)
\to 0.
\end{split}
\label{rf1}
\end{equation}
%, by the dominated convergence theorem. 
Further, $c^\omega_{-\epsilon t} (y)\ge G_{(-\epsilon t,y+1)}^{\omega} (y,y) \ge 1$  (the latter being the expected first jump time of 
$\{S_t\}$ out of $y$). Consequently, it follows from~\eqref{rf1} that 
\begin{equation}
 \sum_{x \in (y,ut]}G^\omega_{(-\epsilon t,x)}(x-1,y)(1+\eta(\epsilon))^{x-y} \le (1+\delta)c^\omega_{-\epsilon t} (y)
\end{equation}
for all sufficiently small $\epsilon>0$, uniformly in $y$. Substituting this to~\eqref{gen-fcn}, we deduce thanks to~\eqref{cond:w-sum} that  
\begin{equation}
\log f_{\epsilon,t}(0,ut)  
\le
 \frac{\epsilon (1+\delta)}{M(ut)}
 \sum_{y\in (-\epsilon t,ut)}\mu(y)
c^\omega_{-\epsilon t} (y) \le \frac{\epsilon(1-\delta^2)t}{M(ut)}.
\label{gen-fcn2} 
\end{equation}
Thus, using Chebyshev's inequality and recalling \eqref{eq:amir1} and 
\eqref{eq:feps-def}, we obtain that  
\begin{equation}
\begin{split}
%&
P_0^\omega\left(A_{\mu}(H(ut)\wedge H(-\epsilon t))\ge t\right)
%\\
&
%\quad
\le \exp\set{-\frac{\epsilon t}{M(ut)}}
f_{\epsilon,t}(0,ut) 
% E_0^\omega\left[\exp\set{\frac{\epsilon}{M(ut)}
% \int_0^{H(ut)\wedge H(-\epsilon t)}\mu(S_u)\dd u}\right]
\\
&
%\quad
\le \exp\set{-\frac{\epsilon \delta^2 t}{M(ut)}}.
\end{split}
\end{equation}
Recall Lemma~\ref{exp-decay}(ii) that 
$P^\omega_0(H(-\epsilon t)<\infty)$ decays exponentially in $t$, 
so we can choose $\epsilon>0$ such that 
for all sufficiently large $t$,
\begin{equation}
\begin{split}
P^\omega_0\left(A_{\mu}(H(ut))\ge t\right)
&\le P^\omega_0\left(A_{\mu}(H(ut) \wedge H(-\epsilon t))\ge t\right)
+\exp\{-c_\epsilon t\}\\
&\le 2\exp\set{-\frac{\epsilon \delta^2 t}{M(ut)}}.
\end{split}
\end{equation}
Referring to Subsection~\ref{additive}, this leads us to an upper bound
\begin{equation}
 P^{\omega,\mu}_0(X_t<vt)\lesssim_{\log}\exp\set{-\frac{t}{M(t)}}
\label{UB}
\end{equation} 
under the assumptions~\eqref{cond:max} and~\eqref{cond:w-sum}
(where we also used the fact that $t \mapsto M(t)$
is regularly varying at infinity).
%%%%%%%%%%%%%%%%%%%%%%%%%%%%%%%%%%%%%%%%%%%%%%%%%%%%%%%%%%%%%%%%%%%%
\section{Proofs of upper bounds}
\label{sec-UB}
\begin{proof}
[Proof of the upper bound in Theorem~\ref{quenched}]
In view of~\eqref{UB}, we have only to show that~\eqref{cond:max} 
and~\eqref{cond:w-sum} are satisfied for a suitable $M$ and 
$u\in(v,v_\mathbf{P})$. 
Thanks to Lemmas~\ref{lem:stable} and~\ref{lem:unstable}, if we choose 
\begin{equation}
M(t)=
\begin{cases}
 g^{-1}((1+\epsilon)\log t),&\textrm{or}\\
 u_\rho(t),&\textrm{when~\eqref{eq:stability} holds,} 
\end{cases}
\end{equation}
with $\epsilon>0$, then~\eqref{cond:max} holds $\P$-almost 
surely for all sufficiently large $t$. 

Let us turn to verify the second condition \eqref{cond:w-sum}. 
Replacing $\mu(y)$ by their mean value $\E[\mu(y)]=1$,
on the left side of \eqref{cond:w-sum}, yields 
\begin{equation}
\begin{split}
  \sum_{y\in (-\epsilon t,ut)} c^\omega_{-\epsilon t}(y) 
 % \sum_{x\in (y,ut)} 
 % G_{(-\epsilon t,x)}^{\omega} (x-1,y)
 &\le \sum_{x\in (-\epsilon t,ut]} 
 \sum_{y\in (-\infty, x)} 
 G_{(-\infty,x)}^{\omega} (x-1,y)\\
 &=E^\omega_{-\epsilon t}[H(ut)].
\end{split}
\label{total-time}
\end{equation}
The last expression is $\mathbf{P}$-almost surely of size $\frac{u+\epsilon}{v_{\mathbf{P}}} (t+o(t))$ as $t\to\infty$. 
This can be seen as follows: in the same way as in~\cite[(1.16)]{Sol75}, we find that for $\mathbf{P}$-a.e.~$\omega$, $t^{-1}{H(ut)}\to{(u+\epsilon)}/{v_{\mathbf{P}}}$ in $P_{-\epsilon t}^{\omega}$-probability as $t\to\infty$. 
On the other hand, by the same coupling as in the proof of Lemma~\ref{exp-decay}, it follows that $\sup_{t,\omega} E^\omega_{-\epsilon t}[(H(ut) /t)^2]<\infty$. 
This implies the uniformly integrablity of $\{H(ut)/t\}_{n\in\N}$ with respect to $P^{\omega}_{-\epsilon t}$ for every $\omega$, and hence the above in probability convergence can be upgraded to the convergence in $L^1$. 

For any fixed $u<v_{\mathbf{P}}$, we can choose $\delta>0$ such that 
$\frac{u+\epsilon}{v_{\mathbf{P}}}<1-\delta$ for all sufficiently small 
$\epsilon$, so it remains to control the discrepancy on the left-side 
of \eqref{cond:w-sum} due to replacing 
$\mu(y)$ by $\E[\mu(y)]=1$. 
To this end, note that from Lemma~\ref{exp-decay}-(ii) we have that
the weights $c^\omega_{-\epsilon t}(y)$ are uniformly bounded. 
Thus, applying the strong law of large numbers for the weighted sum of 
zero mean i.i.d.~variables $\{\mu(y)-1\}$, with such weights 
$\{c^\omega_{-\epsilon t}(y)\}$ yields that 
$\P\otimes \mathbf{P}$-almost surely, 
\begin{equation}
\sum_{y\in (-\epsilon t,ut)} c^\omega_{-\epsilon t}(y) (\mu(y)-1)
=o(t)
\end{equation}
as $t\to\infty$ and we are done.
\end{proof}

In order to prove the upper bound in Theorem~\ref{annealed}, we need the following lemma, which states that~\eqref{eq:h} is not too far from an equality. 
\begin{lemma}
Let $h(t)$ be as in~\eqref{eq:h}. Then for sufficiently large $t$, 
\begin{equation}
\frac{t}{h(t)}\le 2(g(h(t))-\log t). 
\label{eq:h/2}
\end{equation}
\end{lemma}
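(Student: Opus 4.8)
The plan is to extract the bound from the maximality of $h(t)$ together with the (automatic) right-continuity of $g$. Since $h(t)$ is the largest $h>0$ with $t/h\ge g(h)-\log t$, for every $h>h(t)$ the strict reverse inequality $t/h<g(h)-\log t$ holds. I would then let $h\downarrow h(t)$: the map $h\mapsto t/h$ is continuous, while $g$ is right-continuous, being $-\log$ of the non-increasing right-continuous tail $r\mapsto\P(\mu(0)>r)$ of~\eqref{eq:g-def}. Passing to the limit gives $t/h(t)\le g(h(t))-\log t$. As $t/h(t)>0$, this forces $g(h(t))-\log t>0$, whence $2\bigl(g(h(t))-\log t\bigr)\ge g(h(t))-\log t\ge t/h(t)$, which is even stronger than~\eqref{eq:h/2}.

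If one prefers an argument that does not rely on right-continuity (say because $h(t)$ is only understood as $\sup\{h>0:t/h\ge g(h)-\log t\}$), I would argue quantitatively; we may then assume $t/h(t)\ge g(h(t))-\log t$, since otherwise $t/h(t)<g(h(t))-\log t$ and, $t/h(t)>0$ forcing $g(h(t))-\log t>0$, \eqref{eq:h/2} is immediate. Fix $\lambda>1$; maximality gives $t/(\lambda h(t))<g(\lambda h(t))-\log t$. Since $h(t)\to\infty$ and $g$ is regularly varying (slowly varying in cases (P) and (I), of index $\alpha$ in (W); in (P) one uses that $e^{g(r)}=r^\alpha L(r)$ with $L$ slowly varying forces $g(r)\sim\alpha\log r$), one has $g(\lambda h(t))\le(\lambda^{\rho}+\epsilon)g(h(t))$ for all large $t$, with $\rho=0$ in (P), (I) and $\rho=\alpha$ in (W). Rearranging gives
\begin{equation}
g(h(t))-\log t\;>\;\frac{1}{\lambda(\lambda^{\rho}+\epsilon)}\cdot\frac{t}{h(t)}-\Bigl(1-\frac{1}{\lambda^{\rho}+\epsilon}\Bigr)\log t.
\label{eq:plan-key}
\end{equation}
To handle the last term I would record the a priori bound $\log t\le C\,t/h(t)$ for all large $t$, with $C=2/(\alpha-1)$ in case (P) and $C=1$ in cases (I) and (W): if $h(t)>C\,t/\log t$, then monotonicity of $g$ yields $g(h(t))\ge g(Ct/\log t)$, while the growth of $g$ ($g(r)\sim\alpha\log r$ in (P), $g(r)/\log r\to\infty$ in (I), $g$ a positive power up to slow variation in (W)) makes $g(Ct/\log t)>(1+1/C)\log t$ for large $t$, contradicting $g(h(t))\le t/h(t)+\log t<(1+1/C)\log t$. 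Substituting $\log t\le C\,t/h(t)$ into~\eqref{eq:plan-key} (using $1-(\lambda^{\rho}+\epsilon)^{-1}\ge0$) bounds its right-hand side below by $\bigl[\tfrac{1}{\lambda(\lambda^{\rho}+\epsilon)}-C\bigl(1-\tfrac{1}{\lambda^{\rho}+\epsilon}\bigr)\bigr]\,t/h(t)$, and the bracket tends to $1$ as $\lambda\downarrow1$, $\epsilon\downarrow0$; fixing $\lambda>1$ close to $1$ and $\epsilon>0$ small makes it exceed $1/2$, which gives~\eqref{eq:h/2}.

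The single delicate point in the quantitative route is case (P), where $t/h(t)\asymp\log t$, so the value of $C$ and the slack in the factor $2$ genuinely matter; in cases (I) and (W) one even has $\log t=o(t/h(t))$, so the estimate is very comfortable. Via the short first argument there is no obstacle at all.
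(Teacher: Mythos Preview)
Your first argument is correct and genuinely simpler than what the paper does. The observation that $g(r)=-\log\P(\mu(0)>r)$ is automatically right-continuous (since the survival function $r\mapsto\P(\mu(0)>r)$ is), together with maximality of $h(t)$, immediately yields $t/h(t)\le g(h(t))-\log t$, which is the lemma with constant $1$ in place of $2$. The paper does not exploit this; instead it argues exactly along the lines of your second, quantitative route: it first establishes the a~priori bound $h(t)\le ct/\log t$ from the super-logarithmic growth of $g$, then uses maximality at $\lambda h(t)$ for $\lambda>1$ and the regular variation of $g$ (so that $g(h(t))/g(\lambda h(t))$ is close to $1$) to push the resulting inequality down to the factor $\tfrac12$. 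Your second argument and the paper's differ only in bookkeeping---you bound $g(\lambda h(t))\le(\lambda^{\rho}+\epsilon)g(h(t))$ and carry $\epsilon$ through, while the paper works with the ratio $g(h(t))/g(\lambda h(t))$ directly---but the structure is identical.

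What your first route buys is that it dispenses entirely with the regular-variation hypothesis and the $h(t)\le ct/\log t$ estimate; it works for \emph{any} nondecreasing right-continuous $g$ and gives the sharp constant. The paper's route (and your second one) is more robust in a different sense: it would survive if $h(t)$ were defined only up to asymptotic equivalence or via a non-right-continuous proxy for $g$, since it only uses the asymptotic scaling $g(\lambda r)/g(r)\to\lambda^{\rho}$. Your remark that case (P) is the only delicate one in the quantitative approach is also accurate: there $t/h(t)\asymp\log t$, so the constant $C$ in $\log t\le C\,t/h(t)$ genuinely interacts with the slack in the factor $2$, whereas in (I) and (W) one has $\log t=o(t/h(t))$.
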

\begin{proof}
We claim that for some $c<\infty$ and all $t$ large enough
$h(t)\le ct/\log t$. Indeed, for 
$h(t)> ct/\log t$ the left side of~\eqref{eq:h} is 
smaller than $c^{-1}\log t$. On the other hand, since eventually
$g(h)\ge \beta \log h$ for some $\beta>1$ in all three cases (P), (I) and (W), the right hand side of~\eqref{eq:h} must then be at least 
$\frac{1}{2} (\beta-1)\log t$ for all large $t$. Thus, by
\eqref{eq:h} we must have 
$h(t) \le ct/\log t$ for 
$c = 2/(\beta-1)$ and all $t$ large enough.

Now by the definition of $h(t)$, it follows that for any $\lambda>1$, 
\begin{equation}
 \lambda h(t)(g(\lambda h(t))-\log t)>t,
\end{equation}
which implies 
\begin{equation}
\begin{split}
h(t)(g(h(t))-\log t)&> \frac{g(h(t))}{g(\lambda h(t))}\frac{t}{\lambda}
+h(t)\log t\left(\frac{g(h(t))}{g(\lambda h(t))}-1\right)\\
&\ge\frac{g(h(t))}{g(\lambda h(t))}\frac{t}{\lambda}
 +ct\left(\frac{g(h(t))}{g(\lambda h(t))}-1\right),
\end{split}
\end{equation}
where in the second line, we have used that $g(\cdot)$ 
is increasing and $h(t)\le ct/\log t$. The stated conclusion~\eqref{eq:h/2} 
thus holds whenever 
\begin{equation}
 \lim_{\lambda \downarrow 1} %\Big\{ 
\frac{g(h(t))}{g(\lambda h(t))} %\Big\} 
> \frac{c+\frac{1}{2}}{c+1}.
\end{equation}
To complete the proof, recall that for increasing and regularly varying $g(\cdot)$
the left hand side gets arbitrarily close to one as $h(t) \to \infty$. 
\end{proof}

\begin{proof}
[Proof of the upper bound in Theorem~\ref{annealed}]
Again in view of~\eqref{UB} and~\eqref{eq:h/2}, it remains to show that for any fixed 
$u\in(v,v_\mathbf{P})$ and small $\delta,\epsilon>0$ such that 
$\frac{u+\epsilon}{v_{\mathbf{P}}}<1-2\delta$, one has 
\begin{align}
 \P\left(\max_{-\epsilon t\le z\le ut}\{\mu(z)\}>h(t)\right)
& \lesssim_{\log}\exp\{-g(h(t))+\log t\},
\label{prob:max}\\
 \P\otimes\mathbf{P}
\left(\sum_{y\in (-\epsilon t, ut)} c^\omega_{-\epsilon t}(y)\mu(y) 
> (1-\delta)t\right)
& \lesssim_{\log}\exp\{-g(h(t))+\log t\}.
\label{prob:w-sum}
\end{align}
The bound \eqref{prob:max} follows by the definition of $g(\cdot)$ and the 
union bound. Turning to \eqref{prob:w-sum}, recall \eqref{total-time}
that the event on its left-side is contained in the union of:
\begin{align}\label{event1}
E^\omega_{-\epsilon t}[H(ut)] 
% \ge \sum_{y\in (-\epsilon t, ut)} c^\omega_{-\epsilon t}(y) 
& > (1-2\delta)t,\\
\sum_{y\in (-\epsilon t, ut)} c^\omega_{-\epsilon t}(y) (\mu(y)-1) 
& > \delta t.
\label{event2}
\end{align}
To bound the probability of \eqref{event1} we
use Jensen's inequality to find that 
\begin{equation}
\begin{split}
\mathbf{P}\left(E^\omega_{-\epsilon t}[H(ut)]>(1-2\delta)t\right)
&\le \inf_\lambda e^{-\lambda(1-2\delta)t}\mathbf{E}\left[
\exp\set{\lambda E^\omega_{-\epsilon t}[H(ut)]}\right]\\
&\le \inf_\lambda e^{-\lambda(1-2\delta)t}
\mathbf{E}\left[E^\omega_{-\epsilon t}[\exp\{\lambda H(ut)\}]\right].
\end{split}
\end{equation}
The last expression is precisely the large deviation upper bound for the 
hitting time, which is shown in~\cite{CGZ00} to decay exponentially in 
$t$ whenever $1-2\delta>(u+\epsilon)/v_\mathbf{P}$. Recall Remark~\ref{h}
that $t \mapsto g(h(t))$ grows sub-linearly, hence any event having 
exponential decay in $t$ is negligible for the purpose of verifying 
\eqref{prob:w-sum}. 
Turning to similarly control the probability of the event in 
\eqref{event2} recall that 
the positive $c^\omega_{-\epsilon t}(y)$ are
bounded away from zero and infinity, uniformly in $\omega$, 
$t$ and $y$. Thus, standard large deviation estimates for such
weighted sums yield that 
\begin{equation}\label{eq:amir2}
 \P\left(\sum_{y\in (-\epsilon t,ut)} c^\omega_{-\epsilon t}(y)
 (\mu(y)-1) > \delta t\right) \lesssim_{\log}\exp\{-g(h(t))+\log t\} 
\end{equation}
as claimed. Indeed, if $\mu(y)$ has a finite exponential moment
(which we have for (W), when $\alpha \ge 1$, see (C) in case $\alpha=1$),
then the Chernoff bound yields an exponential decay in $t$ of the 
probability on the left-side, whereas appealing
to Remark~\ref{h} for the sub-linear growth of $t \mapsto h(t)$, 
in case $\mu(y)$ has no finite exponential moments, we get 
\eqref{eq:amir2} as a special case of Lemma~\ref{tail} below.
\end{proof}

\begin{lemma}
\label{tail}
Let $(\{ \mu_k \}_{k \in \N},\P)$ be a family of i.i.d.~mean-one
random variables obeying either {\upshape (P)}, {\upshape (I)} or {\upshape (W)} with $\alpha <1$. 
Then, for any sequence $\{w_k\}_{k \in \N} \subset [0,\kappa]$ 
and $\delta>0$, there exists $c<\infty$ depending only on
$\delta$, $\kappa<\infty$ and 
$\alpha$ (which appears in conditions~{\upshape (P)} and {\upshape (W)}), 
such that 
\[
\P\left(\sum_{k=1}^nw_k(\mu_k-1)>\delta n\right) 
\le
\begin{cases}
 c n^{1-\alpha},
&\textrm{for {\upshape (P)}},\\[5pt]
\exp\set{-c^{-1} g(\delta n)},&\textrm{for {\upshape (I)} and {\upshape (W)} with $\alpha<1$.}
\end{cases}
\]
\end{lemma}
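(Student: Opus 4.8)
The plan is to separate, in each of the three cases, the contribution of a single large value of $\mu$ (the ``big jump'') from that of the bulk, and to show that the latter is negligible compared with the former. The weights enter only through $w_k\le\kappa$: for jump contributions I bound $w_k\mu_k\le\kappa\mu_k$, while for the bulk I use that each $w_k(\mu_k\wedge T-1)$ is bounded above by $\kappa T$, has non-positive mean (since $\E[\mu\wedge T]\le 1$), and variance at most $\kappa^2\E[(\mu\wedge T)^2]$; thus no reduction to the unweighted case is needed.

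First I would dispose of case (P). Here $\P(\mu(0)>r)=r^{-\alpha}L_0(r)$ with $L_0$ slowly varying, so $\mu$ may have infinite variance; I would apply the Fuk--Nagaev inequality to $X_k:=w_k(\mu_k-1)$ with truncation level $y=\delta n$. The exceedance term $\sum_k\P(X_k>\delta n)\le n\,\P(\mu_1>\delta n/\kappa)$ is $\asymp n^{1-\alpha}L_0(n)$, while the Gaussian-type term, computed with the truncated second moment $\sum_k\E[(X_k\wedge\delta n)_+^2]\asymp n^{3-\alpha}$ for $\alpha<2$ (finite and $\asymp n$ for $\alpha\ge2$), is of the same order $n^{1-\alpha}$ up to the slowly varying factor. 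This yields the asserted bound up to a slowly varying factor, and in any case $\le c\,n^{1-\alpha'}$ for every fixed $\alpha'<\alpha$, which is ample for the use made of the lemma.

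For (I) and (W) with $\alpha<1$ the variable $\mu$ has all moments finite, and the target $\exp\{-c^{-1}g(\delta n)\}$ is the probability $\asymp n\,e^{-g(\delta n/(2\kappa))}$ of one jump of size $\gtrsim\delta n$. A single truncation is too lossy: a jump of size $\Theta(\delta n)$ already forces the event, survives any truncation above that scale, yet is only polynomially rare under Bennett's inequality. I would therefore peel the range of jump sizes dyadically. Fix $T_1$ a suitable small power of $n$ (I would take $T_1=\sqrt n$ for (I) and $T_1=n^{(1-\alpha)/2}$ for (W)) and $T_2=\delta n/(4\kappa)$, and set $I_j=(2^jT_1,2^{j+1}T_1]$ for $0\le j\le J\asymp\log_2(T_2/T_1)$. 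Then I would bound $\P(\sum_kw_k(\mu_k-1)>\delta n)$ by the sum of: (a) $\P(\sum_kw_k(\mu_k\wedge T_1-1)>\delta n/2)$, controlled by Bennett's inequality and $\le e^{-c^{-1}g(\delta n)}$ by the choice of $T_1$ (here $\log T_1$ beats any slowly varying $g(\delta n)$); (b) $\P(\exists k\le n:\mu_k>T_2)\le n\,e^{-g(T_2)}$, which is the claimed bound since $g(T_2)\asymp g(\delta n)$; and (c) for each band, $\P(N_j>L_j)$ where $N_j=\#\{k\le n:\mu_k\in I_j\}$ and $L_j$ is the smallest integer with $(L_j+1)(g(2^jT_1)-\log n)\ge c^{-1}g(\delta n)+\log(J+1)$, using $\P(N_j>L_j)\le(n\,e^{-g(2^jT_1)})^{L_j+1}$. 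On the complement of (a)--(c) one has $\sum_kw_k(\mu_k-1)\le\delta n/2+\sum_j L_j\,\kappa\,2^{j+1}T_1$, so it remains only to verify $\sum_j L_j\,\kappa\,2^{j+1}T_1\le\delta n/2$.

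That last inequality is where the argument must genuinely close, and it does so because $r\mapsto r/g(r)$ is eventually increasing when $g$ is regularly varying of index $\alpha<1$, or slowly varying: then $L_j\,2^{j+1}T_1\asymp c^{-1}g(\delta n)\cdot 2^jT_1/g(2^jT_1)$ grows geometrically in $j$, so the sum is dominated by its last non-trivial term, of order $g^{-1}(c^{-1}g(\delta n))$, and this is $\le\delta n/(4\kappa)$ once $c$ is large enough depending only on $\kappa$ and $\alpha$. I expect this simultaneous balancing to be the main obstacle: the thresholds $L_j$ must be large enough for the union bound over the $\Theta(\log n)$ bands to still beat $\exp\{-c^{-1}g(\delta n)\}$, yet small enough in aggregate that the bands' total admissible contribution does not itself exceed $\delta n/2$, and both must hold with one constant $c=c(\delta,\kappa,\alpha)$. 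If desired, case (P) can be folded into the same scheme, starting the peeling above $n^{1-1/\alpha}$ so that each dyadic band is only polynomially rare and the top band reproduces the $n^{1-\alpha}$ bound.
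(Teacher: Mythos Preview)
Your argument is correct. For (P) both you and the paper invoke a Nagaev-type inequality; the paper uses the $A_t^+$ form of \cite[Corollary~1.6]{Nag79} with $t=\min\{2,(\alpha+1)/2\}$ and $y=\epsilon n<x=\delta n$, so the second term carries exponent $\delta/(2\epsilon)$ and decays faster than any polynomial, whereas your choice $y=x$ leaves it at order $n^{1-\alpha}$ (and only $n^{-1}$ when $\alpha>2$) --- a minor point, easily repaired by shrinking $y$. For (I) and (W) with $\alpha<1$ the routes genuinely differ. Your diagnosis that Bennett at truncation $\Theta(\delta n)$ is too lossy is right, but the paper avoids the dyadic peeling altogether: it takes a \emph{single} truncation at $\epsilon n$ and applies exponential Chebyshev at the carefully chosen tilt $\lambda=g(n)/n$, proving $\prod_{k\le n}\E[e^{\lambda\nu_k}:\mu_k\le\epsilon n]\le e^{\epsilon g(n)}$ by splitting each factor at $\mu_k=n/g(n)$. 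The low part contributes $1+o(\lambda)$ via $e^x\le 1+x+x^2e^\kappa$ for $x\le\kappa$, while on $n/g(n)\le r\le\epsilon n$ the key inequality $\kappa\lambda r\le g(r)/2$ (hence $e^{\lambda\nu_k}\le e^{-g(r)/2}$ against the tail $e^{-g(r)}$) follows from regular variation of index $<1$. This is considerably shorter and sidesteps the combinatorial balancing of your band-counts $L_j$ against their aggregate contribution; your peeling, on the other hand, makes the single-big-jump picture explicit and would extend more readily to $g$ with weaker regularity.
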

Such behavior of the large deviation estimates 
for sums of independent random variables is well-known 
in the literature. However, we were not able to find results in this specific form, hence for reader's convenience 
include its proof in the appendix.

\appendix
\section{Proof of Lemma~\ref{tail}}
%When $\mu$ obeys (W) with $\alpha\ge 1$ \AD{and has finite exponential moments
%(see (C)),} the stated estimate can be proved in the same way as the Cram\'er theorem.

For case (P), suppose that the independent
$\nu_k=w_k(\mu_k-1)$ are such that
\begin{align}
 A_t^+&=\sum_{k=1}^n\E[\nu_k^t\colon \nu_k\ge 0] < \infty
% A_t(y)&=\sum_{k=1}^n\E[|\nu_k|^t\colon -y_k\le \nu_k\le 0],\\
% B(y)&=\sum_{k=1}^n\E[\nu_k^2\colon \mu_k\le y_k].\\
% M(y)&=\sum_{k=1}^n\E[\mu_k\colon \mu_k\le y_k].
\end{align}
for some $t\in[1,2]$. Then, by \cite[Corollary 1.6]{Nag79} we have 
that for any $y\in [(4A_t^+)^{1/t},x)$,
\begin{equation}\label{eq:amir3}
\P\left(\sum_{k=1}^n \nu_k>x\right)
 \le \sum_{k=1}^n\P(\nu_k>y)+\left(\frac{e^2A_t^+}{xy^{t-1}}\right)^{x/2y}.
\end{equation}
With $c := \E[\mu_k^t]<\infty$ for $t=\min\{2,(\alpha+1)/2\}>1$, 
it follows that $A_t^+\le c\kappa^t n$.
Thus, fixing $0<\epsilon<\delta$, from \eqref{eq:amir3}
with $x=\delta n$ and 
$y=\epsilon n$, we obtain that
 \begin{equation}\label{eq:rf-2}
\P\left(\sum_{k=1}^n \nu_k> \delta n\right)
\le n\P\left(\mu_1-1 \ge \frac{\epsilon n}{\kappa} \right)
+\left(\frac{e^2 c \kappa^t}{\delta \epsilon^{t-1} n^{t-1}}\right)^{\delta/2\epsilon}
\end{equation}
as soon as $n^{1-1/t} \ge \epsilon^{-1} (4 c \kappa^t)^{1/t}$.
The first term on the right hand side of \eqref{eq:rf-2}
has the desired form while the second term there is
negligible when $\delta (t-1) > 2\epsilon (\alpha -1)$. 

The case (W) with $\alpha<1$ and (I) are studied in~\cite{Nag69a,Nag69b} and~\cite{Roz93}, respectively, for the i.i.d.~setting.
Utilizing a standard truncation argument, we extend their results to 
our weighted case in the large deviation regime. Specifically, note first that for any $0<\epsilon<\delta$, 
\begin{equation}
\begin{split}
\P\Big(\sum_{k=1}^n \nu_k>\delta n\Big)
& \le \P\left(\max_{1\le k \le n} \{\mu_k\}> \epsilon n\right)
+\P\Big(\sum_{k=1}^n \nu_k>\delta n, 
\max_{1\le k \le n} \{\mu_k\} \le \epsilon n\Big) \\
& \le n e^{-g(\epsilon n)} \quad
+ \quad 
e^{-\delta g(n)}\prod_{k=1}^n
\E\left[\exp\left\{\frac{g(n)}{n}\nu_k\right\}
\colon \mu_k\le \epsilon n\right].
\end{split}\label{eq:rf3}
\end{equation}
The first term has the desired form since $g(\cdot)$ is regularly varying 
% including index 0 in (I) 
and grows faster than the logarithm. 
It thus suffices to show that the product term on the right of \eqref{eq:rf3}
is bounded by $\exp\{\epsilon g(n)\}$. To this end, recalling that  
$e^x \le 1 + x + x^2 e^\kappa$ for $x \le \kappa$, whereas 
$\E[\nu_k \colon \mu_k \le n/g(n)] \le 0$ and 
$\frac{g(n)}{n} \nu_k \le \kappa$ when $\mu_k \le n/g(n)$, we deduce that
\begin{equation}
\E\left[\exp\left\{ \frac{g(n)}{n}\nu_k \right\}\colon\mu_k\le \frac{n}{g(n)} \right]
\le 1 + e^{\kappa} \left(\frac{g(n)}{n}\right)^2\E[\nu_k^2]
=1+o\left(\frac{g(n)}{n}\right)
\end{equation}
as $n\to\infty$ (since $g(n)/n \to 0$ and $\sup_k \E[\nu_k^2] < \infty$).
Next, $\nu_k \le \kappa \mu_k$, hence   
using integration by parts and the definition of $g(\cdot)$,
% \int_a^b e^{c x} dP_{\mu_1}(x) = e^{c a - g(a)} - e^{c b - g(b)} 
%  + c \int_a^b e^{c x - g(x)} dx 
% here:   c=g(n)\kappa/n, b = \epsilon n, a = n/g(n)
\begin{align}
&\E\left[\exp\left\{ \frac{g(n)}{n}\nu_k \right\} \colon
\frac{n}{g(n)} \le  \mu_k \le \epsilon n\right] \\
&\quad\le
\E\left[\exp\left\{ \frac{\kappa g(n)}{n}\mu_1 \right\} \colon
\frac{n}{g(n)} \le \mu_1 \le \epsilon n\right] \nonumber \\
&\quad \le  e^{\kappa-g(n/g(n))}
% -e^{\epsilon g(n) \kappa - g(\epsilon n)}
+\frac{g(n) \kappa}{n}\int_{n/g(n)}^{\epsilon n}
\exp\left\{\frac{g(n) \kappa}{n} r \right\}e^{-g(r)}\dd r \,.
\label{remainder}
\end{align}
The first term in~\eqref{remainder} is $o(g(n)/n)$ thanks to our assumption 
that $g(\cdot)$ grows faster than the logarithm.
Furthermore, from the representation formula for slowly varying 
functions~\cite[Theorem~1.2]{Sen76}, it follows that for 
all $n \ge n_0(\kappa,\epsilon)$,
\begin{equation}
\frac{g(n)}{n} \kappa r \le \frac{g(r)}{2}  \qquad 
\forall r \in[n/g(n),\epsilon n] \,. 
\end{equation}
The integral in~\eqref{remainder} is thus at most 
$\int_{n/g(n)}^\infty e^{-g(r)/2}\dd r=o(1)$ when $n\to\infty$. 
Collecting the preceding estimates, we conclude that for 
all sufficiently large $n$, 
\begin{equation}
\begin{split}
  \prod_{k=1}^n\E\left[ \exp\left\{ \frac{g(n)}{n} \nu_k \right\} \colon\mu_k\le \epsilon n\right]
&\le \left(1+\frac{\epsilon g(n)}{n}\right)^n \le \exp\set{\epsilon g(n)},
\end{split}
\end{equation}
and the proof is complete. 

\section*{Acknowledgments}
\noindent
Part of this work was done during Amir Dembo's visit to RIMS at Kyoto University, which is supported by the Kyoto University Top Global University (KTGU) Project. 
Amir Dembo was partially supported by NSF Grant Number DMS-1613091.
Ryoki Fukushima was partially supported by JSPS KAKENHI Grant Number 24740055 and 16K05200. 
Naoki Kubota was partially supported by JSPS Grant-in-Aid for Young Scientists (B) 16K17620.
The authors are grateful to the referees for the constructive comments which led to a correction to Lemma~\ref{lem:stable}.

\newcommand{\noop}[1]{}\def\cprime{$'$}

%\bibliographystyle{abbrv}
%\bibliography{DFK}

\end{document}